\documentclass[12pt,twoside,draft]{article}

\setlength{\oddsidemargin}{7ex}
\setlength{\evensidemargin}{7ex}
\setlength{\textheight}{21.5cm}
\usepackage{times,amsmath,amsthm,amssymb,bm}
\numberwithin{equation}{section}

\theoremstyle{plain}
\newtheorem{thm}{Theorem}[section]

\newtheorem{prop}[thm]{Proposition}

\theoremstyle{definition}

\newtheorem{conj}[thm]{Conjecture}


\newcommand{\memo}[1]{}

\pagestyle{myheadings}
\markboth{\upshape Y.\ Takenouchi, Y.\ Ota}{\upshape 
Effect of the time delay on the stability and instability of the logistic map.}

\begin{document}

\title{\bf{Effect of the time delay on the stability and instability of the logistic map}}
\author{{\normalsize by}\\[0.5ex] Yoshifumi Takenouchi and Yasushi Ota}
\date{\normalsize August 26, 2009}
\maketitle

\footnote{ 
{\bf 2000 Mathematics Subject Classifications:} 
Primary 
39A30; 
}

\footnote{ 
{\bf Keywords and Phrases:}
Logistic equation;
time delay;
instability .
}

\vspace*{-13ex}

\begin{abstract}\vspace*{0ex}
\begin{center}\hspace*{-1.5ex}\begin{minipage}{65ex}\hspace{2.5ex}
A proper discretization of the logistic differential equation, which is preserving these two distinct equilibrium solutions and their unstability and stability, suggest that we need to examine the time delay of the logistic map. According to Murray \cite{murray}, the effect of delay in models is ``usually" to increase the potential for instability. However the word ``usually" is really ambiguous.
In this paper, we mathematically formulate and prove the two conjectures about stability and instability.
\end{minipage}
\end{center}\vspace*{6ex}
\end{abstract}

\section{Introduction}

The logistic differential equation (cf. \cite{math-model}, etc \ldots ) is given by

\begin{equation}
\frac{dx}{dt}=rx\left(1-\frac{x}{K}\right),
\label{eq:logistic-1}
\end{equation}
where $r$ and $K$ stand for 
the reproduction rate and the carrying capacity for the population, respectively.
Thus, let us assume that $r$ is a positive parameter and $K$ is any fixed positive constant.
Here, we note that the equilibrium solution $X_{1}=0$ is unstable for any $r>0$, and 
the equilibrium solution $X_{2}=K$ is stable for any $r>0$.

Let us find a proper discretization of (\ref{eq:logistic-1}), which preserves these two distinct equilibrium solutions and their instability and stability.
First, we consider the following discretization:
\begin{equation}
\begin{array}{rccl}
&{\displaystyle \frac{x(t+\Delta t)-x(t)}{\Delta t}}
&=&{\displaystyle rx(t)\left(1-\frac{x(t)}{K}\right)} \hspace{2ex} \mbox{ as }\Delta t\to0\\
\mbox{} \\
\Longrightarrow&{\displaystyle \frac{X_{n+1}-X_{n}}{h}}
&=&{\displaystyle rX_{n}\left(1-\frac{X_{n}}{K}\right)}\\
&\mbox{(forward difference)}&&\\
\end{array}
\label{discretization-1}
\end{equation}
\noindent where $X_{n+1}:=x(t+\Delta t),X_{n}:=x(t)$, and $h:=\Delta t(>0)$.

\noindent This becomes a first order nonlinear difference equation:
\begin{equation}
X_{n+1}=(rh+1)X_{n}-\frac{rh}{K}X_{n}^2.
\label{eq:dis-1}
\end{equation}
We note that the fixed point $X=0$ is unstable for any $r>0$, and 
the fixed point $X=K$ is stable if $0<r<\frac{2}{h}$, and unstable if $\frac{2}{h}<r$.
Therefore, though the non-trivial equilibrium solution of (\ref{eq:logistic-1}) is stable for any $r>0$, the non-trivial fixed point of (\ref{eq:dis-1}) is stable only for $0<r<\frac{2}{h}$.
In this sense, (\ref{eq:dis-1}) is not a proper discretization of (\ref{eq:logistic-1}).

Next, we consider the following discretization:
\begin{equation}
\begin{array}{rccl}
&{\displaystyle \frac{x(t+\Delta t)-x(t)}{\Delta t}}
&=&{\displaystyle rx(t)\left(1-\frac{x(t+\Delta t)}{K}\right)}\mbox{ as }\Delta t\to0\\
\mbox{} \\
\Longrightarrow&{\displaystyle \frac{X_{n+1}-X_{n}}{h}}
&=&{\displaystyle rX_{n}\left(1-\frac{X_{n+1}}{K}\right)}\\
&\mbox{(forward difference)}&&\\
\end{array}
\label{discretization-4}
\end{equation}
\noindent where $X_{n+1}:=x(t+\Delta t),X_{n}:=x(t)$, and $h:=\Delta t(>0)$.

\noindent This becomes a first order nonlinear difference equation:
\begin{equation}
X_{n+1}=\frac{(1+rh)X_{n}}{{\displaystyle 1+\frac{rh}{K}X_{n}}}.
\label{eq:ver-4}
\end{equation}
We note that the fixed point $X=0$ is unstable for any $r>0$, and 
the fixed point $X=K$ is stable any $r>0$.
Then, the non-trivial fixed point of (\ref{eq:ver-4}) is stable for any $r>0$.
In this sense, we see that (\ref{eq:ver-4}) is a proper discretization of (\ref{eq:logistic-1}).

Observe that if the term $X_{n+1}$ on the right-hand side of (\ref{discretization-4}) 
is replaced by $X_{n}$, then 
the range of stability of the trivial fixed point does not change and  
the range of stability of the non-trivial fixed point shrinks 
from $(0,+\infty)$ to $(0,\frac{2}{h})$.

If we extend this observation, 
we can naturally guess that if the term $X_{n+1}$ on the right-hand side of (\ref{discretization-4}) 
is replaced by $X_{n-\tau}$, then 
the range of stability of the trivial fixed point does not change and  
the range of stability of the non-trivial fixed point shrinks further as $\tau$ increases.

According to Murray \cite{murray}, the effect of delay in models is ``usually" to increase the potential for instability. However the word ``usually" is really ambiguous.

In this paper, we mathematically formulate and prove the following conjectures:
for the discrete logistic equation which includes the time delay term $X_{n-\tau}$, 
$(i)$ the range of stability of the trivial fixed point does not change as we change the value of $\tau$; and 
$(ii)$ the range of stability of the non-trivial fixed point constricts gradually as we increase the value of $\tau$.

\section{Discrete Logistic Equation with Time Delay}

Equation (\ref{eq:logistic-1}) is not realistic in modeling a population with distinct
breeding seasons for it incorporates instantaneous response to changes in the environment.
In reality, this response to changes usually takes effect after some time delay or time lag.

In making things more realistic in modeling populations, we shall consider another equation which will be the main equation of this study. This equation is similar to the discrete logistic equation, however, it incorporates the time delay between increased deaths and the resulting decrease in population reproduction. This equation is known to be the Discrete Logistic Equation with Time Delay \cite{math-model}, which is given by
\begin{equation}
X_{n+1} = X_n + rX_n\biggl(1 - \displaystyle \frac{X_{n-\tau}}{K}\biggr)
\label{eq:time-delay1}
\end{equation}
Again, $\tau$ and K are the reproduction rate of the population and carrying capacity respectively. We denote the time delay to be  Equation (\ref{eq:time-delay1}), similar to Equation (\ref{eq:logistic-1}), has two distinct fixed points, namely the trivial fixed point denoted by $X_1 = \underbrace{(0, 0, \cdots, 0)}_{\tau+1}$, and the non-trivial fixed point given by $X_2 = \underbrace{(K, K, \cdots, K)}_{\tau+1}.$
Futhermore, we can guess that the stability of the population size of equation (\ref{eq:time-delay1}) also depends on the value of $\tau$ as equation (\ref{eq:logistic-1}). Equation (\ref{eq:time-delay1}) is applied by May on A.J. Nicholson's experimental data concerning Australian blowflies \cite{math-model}, a pest in the Australian sheep industry. May well-approximated the experimental data Nicholson had gathered using equation (\ref{eq:time-delay1}) and is shown by Figure 3.8.1 in \cite{math-model}. The parameter values used for the output of the model are $\tau = 0.106 days^{-1}, K = 2.8 \times 10^3$ flies and $\tau = 17$ days.
\section{Stability of the trivial fixed point}

In this chapter, we summarizes the results we obtained from the computations. 
\begin{table}[htbp]
\begin{center}
\begin{tabular}{|c|c|c|} \hline
Time Delay & Trivial fixed point & Range of stability \\\hline
$\tau = 0$ & $X_1 = 0$ & $ -2 < r < 0 $ \\\hline
$\tau = 1$ & $X_1 = (0, 0)$ & $ -2 < r < 0 $ \\\hline
$\tau = 2$ & $X_1 = (0, 0, 0)$ & $ -2 < r < 0 $ \\\hline
$\tau = 3$ & $X_1 = (0, 0, 0, 0)$ & $ -2 < r < 0 $ \\\hline
$\vdots$ & $\vdots$ & $\vdots$ \\\hline
\end{tabular}
\end{center}
\caption{Time Delay Stability of the trivial fixed point}
\label{fig:table1}
\end{table}
From the Table \ref{fig:table1} below, notice that as the value of $\tau$ increases, the range of stability of
the trivial fixed point is the same all throughout, which is $-2 < r < 0$. For the non-trivial
fixed point, we can observe that as the $\tau$ increases, its range of stability decreases. From the computations made, it shows numerically the statement of Murray that the effect of delay in models is usually to increase the potential for instability. Hence, the following proposition is formed from these observations.
\begin{prop}
Let
\begin{equation}
X_{n+1} = X_{n} + r X_{n}\Bigl(1-\displaystyle \frac{X_{n-\tau}}{K}\Bigr)
\end{equation}
\noindent for any $\tau$.
Then, there are always two fixed points  $X_1 = \underbrace{(0, 0, \cdots, 0)}_{\tau+1}, X_2 = \underbrace{(K,K, \cdots ,K)}_{\tau+1}$ and
$X_1$ is stable if \ $-2 < r < 0.$
\end{prop}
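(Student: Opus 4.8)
The plan is to linearize the delay difference equation around the trivial fixed point and reduce the stability question to the location of the roots of the resulting characteristic polynomial. First I would write the $(\tau+1)$-dimensional first-order system equivalent to (\ref{eq:time-delay1}) by setting $Y_n = (X_n, X_{n-1}, \dots, X_{n-\tau})$, so that one step of the recursion is given by a map $F$ whose first coordinate is $X_n + rX_n(1 - X_{n-\tau}/K)$ and whose remaining coordinates are the shift. Evaluating the Jacobian $DF$ at $X_1 = (0,\dots,0)$, the nonlinear term $rX_nX_{n-\tau}/K$ is quadratic and drops out, so the linearization is simply $X_{n+1} = (1+r)X_n$ with the trivial delay playing no role; concretely $DF$ is the companion-type matrix whose only nontrivial entry coming from the logistic part is $1+r$ in the top-left corner, and whose subdiagonal is all ones.

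The key step is then to compute the characteristic polynomial of this $(\tau+1)\times(\tau+1)$ companion matrix. Because the delay term vanishes at $X_1$, the matrix is almost upper/lower triangular: expanding the determinant $\det(\lambda I - DF)$ along the appropriate row gives $\lambda^\tau(\lambda - (1+r)) = 0$. Hence the eigenvalues are $\lambda = 0$ with multiplicity $\tau$ and $\lambda = 1 + r$ with multiplicity one. Asymptotic stability of the fixed point for a map is equivalent to all eigenvalues lying strictly inside the unit disk, i.e. $|\lambda| < 1$ for every eigenvalue. The zero eigenvalues satisfy this automatically and independently of $\tau$, so the only constraint is $|1+r| < 1$, which is exactly $-2 < r < 0$. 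This also transparently explains conclusion $(i)$: since the delay contributes only null eigenvalues at the trivial point, the stability interval is genuinely independent of $\tau$.

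Finally I would assemble these pieces: existence of the two fixed points is immediate by substituting the constant vectors $(0,\dots,0)$ and $(K,\dots,K)$ into (\ref{eq:time-delay1}) and checking both sides agree; the stability claim follows from the eigenvalue computation together with the standard linearization (Hartman--Grobman / principle of linearized stability for discrete dynamical systems). I do not anticipate a serious obstacle here: the main point to be careful about is the bookkeeping in the determinant expansion for general $\tau$ (an induction on $\tau$, or a cofactor expansion along the first column, makes it routine), and a remark that the boundary cases $r = 0$ and $r = -2$ are excluded because $|1+r| = 1$ there, where linearization is inconclusive and the fixed point fails to be asymptotically stable in the relevant sense.
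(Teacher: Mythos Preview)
Your proposal is correct and essentially identical to the paper's proof: both linearize the delayed map by passing to the $(\tau+1)$-dimensional first-order system, compute the Jacobian at $X_1$, observe that the delay entry vanishes there so the matrix is triangular with eigenvalues $0$ (multiplicity $\tau$) and $1+r$, and conclude stability precisely when $|1+r|<1$. The only cosmetic difference is the ordering of the state vector: the paper uses $(X_{n-\tau},\dots,X_n)$, which makes $J(X_1)$ upper triangular with $1+r$ in the bottom-right, so the eigenvalues are read off directly from the diagonal without the determinant expansion you mention.
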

\begin{proof}
Given the dicrete logistic equation with time delay in this form
\begin{equation}
X_{n+1} = X_{n} + r X_{n}\Bigl(1-\displaystyle \frac{X_{n-\tau}}{K}\Bigr)
\end{equation}
there are always two fixed points, $X_1 = \underbrace{(0, 0, \cdots, 0)}_{\tau+1}, X_2 = \underbrace{(K,K, \cdots ,K)}_{\tau+1}.$

\noindent To get the range of stability of the trivial fixed point, we need its Jacobian matrix and it is
given by
\begin{equation}
J(X_1) = 
\left(
  \begin{array}{ccccccc}
      0 &  1  &  0  &  0  &  \cdots  &  \cdots  &  0  \\
      0 &  0  &  1  &  0  &  \cdots  &  \cdots  &  0    \\
      0 &  0  &  0  &  \ddots  &   &    &  \vdots  \\
      \vdots & \vdots   & \vdots   & \ddots   & \ddots   &    &  \vdots  \\
      \vdots & \vdots   & \vdots   &    & \ddots   & \ddots   &  \vdots  \\
      0 &  0  &  0  & \cdots  & \cdots  &  0  &  1  \\
      0 &  0  &  0  & \cdots  & \cdots  &  0  &  1+r  \\
  \end{array}
\right)_{(\tau+1)\times(\tau+1)}
\end{equation}
Since $J(X_1)$ is an upper triangular matrix, then the eigenvalues of it is just its diagonal
entries, $ \lambda_i = 0 \ (i = 1, 2, \cdots, n), \lambda_{\tau+1} = 1+r.$
Applying the stability test (cf. \cite{chaos}, etc \ldots ), the fixed point $X_1 = \underbrace{(0, 0, \cdots, 0)}_{\tau+1}$ is stable if $-2 < r < 0$.

Thus the proof is completed.
\end{proof}

\section{Stability of the Non-trivial fixed point}

We then use the revised Jury's Conditions for Stability Test in finding the stability of the discrete logistic equation with time delay, given by Equation (\ref{eq:time-delay1}), at its non-trivial fixed point.
Having computed for the range of stability of the equation when $\tau$ = 0, 1, 2, 3,   
we have the following results summarized by Table \ref{fig:table2}.
\begin{table}[htbp]
\begin{center}
\begin{tabular}{|c|c|c|} \hline
Time Delay & Non-trivial fixed point & Range of stability \\\hline
$\tau = 0$ & $X_2 = K$ & $ 0 < r < 2 $ \\\hline
$\tau = 1$ & $X_2 = (K, K)$ & $ 0 < r < 1 $ \\\hline
$\tau = 2$ & $X_2 = (K, K, K)$ & $ 0 < r < 0.618034 $ \\\hline
$\tau = 3$ & $X_2 = (K, K, K, K)$ & $ 0 < r < 0.445042 $ \\\hline
$\vdots$ & $\vdots$ & $\vdots$ \\\hline
\end{tabular}
\end{center}
\caption{Time Delay Non-trivial Fixed Point Range of Stability}
\label{fig:table2}
\end{table}
The results we have from our computations that is summarized by the table
above evidently show that as time delay increases, the range of stability of the equation determined by the value of $r$ shrinks. According to Murray \cite{murray}, the effect of delay in models is usually to increase the potential for instability. This means that as we increase the time delay of the population to respond to changes in the environment, the risk of instability increases. With such instability, tremendous changes is possible to occur to the population.
\begin{conj}
\textit{Let  
\begin{equation}
X_{n+1} = X_{n} + r X_{n}\Bigl(1-\displaystyle \frac{X_{n-\tau}}{K}\Bigr)
\end{equation}
for any $\tau$. Then there are always two fixed points  $X_1 = \underbrace{(0, 0, \cdots, 0)}_{\tau+1}, X_2 = \underbrace{(K,K, \cdots ,K)}_{\tau+1}$ and
$X_2$ is stable if \ $0 < r < f(\tau),$
where 
$$
f(\tau) : N \to \textbf{R} \ \mbox{is a monotone decreasing function of $\tau$.}
$$}
\end{conj}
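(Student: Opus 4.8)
The plan is to reduce the stability question at $X_2$ to a root-location problem for one explicit polynomial, to identify the stability threshold in $r$ as the first value at which a root meets the unit circle, and then to read off monotonicity from a closed form. First I would linearize \eqref{eq:time-delay1} about $X_2=(K,\dots,K)$: setting $X_n=K+u_n$ and dropping the quadratic term $u_n u_{n-\tau}$, the perturbation satisfies $u_{n+1}=u_n-r\,u_{n-\tau}$; substituting $u_n=\lambda^n$ and cancelling the common factor $\lambda^{n-\tau}$ yields the characteristic polynomial
\[
P_\tau(\lambda)=\lambda^{\tau+1}-\lambda^{\tau}+r .
\]
Then $X_2$ is (locally asymptotically) stable exactly when every root of $P_\tau$ lies strictly inside the open unit disk, so the task is to find all $r>0$ for which $P_\tau$ is a Schur polynomial.

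Next I would track the roots as $r$ increases from $0$. At $r=0$ one has $P_\tau(\lambda)=\lambda^{\tau}(\lambda-1)$, a $\tau$-fold root at $0$ and a simple root at $1$; since $P_\tau(1)=r>0$ and $P_\tau'(1)=1>0$, increasing $r$ slightly moves this boundary root to $\lambda\approx 1-r$, i.e.\ strictly inside, while the roots near $0$ stay near $0$. Hence $P_\tau$ is Schur for all sufficiently small $r>0$. By continuous dependence of the roots on the parameter, stability can only be lost at a value of $r$ for which $P_\tau$ has a root $\lambda=e^{i\theta}$ on the unit circle. Writing $e^{i\theta}-1=2\sin(\theta/2)\,e^{i(\theta+\pi)/2}$, the equation $P_\tau(e^{i\theta})=0$ with $r>0$ splits into a modulus condition $r=2\sin(\theta/2)$ and an argument condition $(2\tau+1)\theta\equiv\pi\pmod{4\pi}$. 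Among admissible angles the one minimizing $2\sin(\theta/2)$ is $\theta=\pi/(2\tau+1)$, so the first crossing occurs at
\[
r=f(\tau):=2\sin\!\left(\frac{\pi}{2(2\tau+1)}\right),
\]
and indeed $f(0)=2$, $f(1)=1$, $f(2)=2\sin(\pi/10)=0.618034\ldots$, $f(3)=2\sin(\pi/14)=0.445042\ldots$, agreeing with Table~\ref{fig:table2}.

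Finally I would assemble the statement. Combining the two previous steps: for $0<r<f(\tau)$ no root of $P_\tau$ lies on the unit circle, and since the roots are inside for small $r>0$ and move continuously with $r$, they remain inside throughout $(0,f(\tau))$; thus $X_2$ is stable precisely for $0<r<f(\tau)$. Monotonicity is then immediate --- for $\tau\in\mathbb{N}$ the argument $\pi/(2(2\tau+1))$ lies in $(0,\pi/2)$ and strictly decreases as $\tau$ grows, and $\sin$ is strictly increasing on $(0,\pi/2)$, so $f$ is strictly decreasing, which proves the conjecture (with an explicit $f$). I expect the main obstacle to be this middle step: rigorously excluding any earlier loss of stability, where the delicate points are justifying that all roots really lie inside the disk for small $r>0$ and choosing the correct branch of the argument; once these are settled, continuity of roots closes the argument. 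An alternative is to verify the Jury (Schur--Cohn) conditions for $P_\tau$ directly, but for general $\tau$ these inequalities become unwieldy, so the boundary-crossing route seems cleaner.
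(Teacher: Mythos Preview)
Your proposal is correct and takes a genuinely different route from the paper. The paper attacks the problem via the Jury (Schur--Cohn) table: it observes that the characteristic polynomial has only three nonzero coefficients (the leading one, the subleading one, and the constant), shows by induction on the Jury step that this three-term structure is preserved at every stage of the table, and then argues that each additional Jury inequality imposed when $\tau$ increases by one can only shrink the admissible $r$-interval. It never produces a closed form for $f(\tau)$; monotonicity is inferred from the nesting of constraint sets rather than from an explicit formula.

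Your boundary-crossing argument is both cleaner and strictly more informative: it yields the explicit threshold $f(\tau)=2\sin\bigl(\pi/(2(2\tau+1))\bigr)$, from which strict monotonicity is immediate, and it matches the numerical values in Table~\ref{fig:table2} on the nose. The price you pay is the analytic care you already flag---tracking the root that starts at $\lambda=1$ into the disk via $P_\tau'(1)=1$ and the implicit function theorem, and arguing that among the admissible crossing angles $\theta\equiv\pi/(2\tau+1)\pmod{4\pi/(2\tau+1)}$ the one in $(0,\pi]$ with smallest $\sin(\theta/2)$ is $\theta=\pi/(2\tau+1)$. Both points are routine once stated. By contrast, the paper's Jury approach avoids any root-tracking but leaves the actual value of $f(\tau)$ implicit and the monotonicity step somewhat heuristic (``this condition further shortens the range''); your approach turns the conjecture into an identity.
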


\medskip

This conjecture generalizes the behavior of the stability of the discrete logistic equation with time delay at its non-trivial fixed point. 
The proof of the conjecture above is given by induction.

\begin{proof}
We begin the proof by assuming that $P^{(k)}_{n+1}(\lambda)$, a polynomial that satisfies $k+2$ Jury Condition. Then afterwards, we will show that the polynomial $P^{(k+1)}_{n+1}(\lambda)$
will be able to satisfy $k+3$ Jury conditions, that is, for any $n \ge k$.

Now, we define $P^{(k)}_{n+1}(\lambda)$ as 
\begin{equation}
\begin{split}
&P^{(k)}_{n+1}(\lambda) \\
&= a_0^{(k)}\lambda^{n+2-k} + a_1^{(k)}\lambda^{(n+2-k)-1} + \cdots + a_{(n+2-k)-2}^{(k)}\lambda^2 + a_{(n+2-k)-1}^{(k)}\lambda + a_{n+2-k}^{(k)},
\end{split}
\label{eq:Pol1}
\end{equation}
where $a_0^{(k)}, a_{(n+2-k)-1}^{(k)}, a_{n+2-k}^{(k)} \neq 0$ and others are $0$.

$P^{(k)}_{n+1}(\lambda)$ have a value satisfies the $k+2$ Jury Condition given by 
\begin{equation}
|a_{n+2-k}^{(k)}| > a_0^{(k)}
\end{equation}
which does not depend on any $n\ge k.$
Then, $P^{(k+1)}_{n+1}(\lambda)$ is given by 
\begin{equation}
\begin{split}
P^{(k+1)}_{n+1}(\lambda) = &a_0^{(k+1)}\lambda^{n+2-(k+1)} + a_1^{(k+1)}\lambda^{(n+2-(k+1))-1} + \cdots + a_{(n+2-(k+1))-2}^{(k+1)}\lambda^2 \\
&+ a_{(n+2-(k+1))-1}^{(k+1)}\lambda + a_{n+2-(k+1)}^{(k+1)} \\
= &a_0^{(k+1)}\lambda^{n+1-k} + a_1^{(k+1)}\lambda^{(n+1-k)-1} + \cdots + a_{(n+1-k)-2}^{(k+1)}\lambda^2 \\ 
&+ a_{(n+1-k)-1}^{(k+1)}\lambda + a_{n+1-k}^{(k+1)}
\end{split}
\label{eq:Pol2}
\end{equation}

Since the polynomial is the preceding polynomial after $k$, by the process of obtaining the coefficients described in (\ref{eq:juryformula}) or using Table \ref{fig:revisedjury} in Appendix, we must use the coefficients of the polynomial $P^{(k)}_{n+1}(\lambda)$ in order to find the coefficients of $P^{(k+1)}_{n+1}(\lambda)$.

Then the coefficients of $P^{(k+1)}_{n+1}(\lambda)$ are given by as follows:

\vspace{3ex}

$a_0^{(k+1)} = \left|
  \begin{array}{cc}
     a_{n+2-k}^{(k)}  &  a_{(n+2-k)-1}^{(k)}  \\
     a_0^{(k)}  &  a_1^{(k)}  \\
  \end{array}
\right|
= -a_{(n+2-k)-1}^{(k)}a_0^{(k)}
$

\vspace{5ex}

$a_1^{(k+1)} = \left|
  \begin{array}{cc}
     a_{n+2-k}^{(k)}  &  a_{(n+2-k)-2}^{(k)}  \\
     a_0^{(k)}  &  a_2^{(k)}  \\
  \end{array}
\right|
= 0
$

\vspace{5ex}

\hspace{5ex} \vdots

\vspace{5ex}

$a_{(n+1-k)-2}^{(k+1)} = \left|
  \begin{array}{cc}
     a_{n+2-k}^{(k)}  &  a_{2}^{(k)}  \\
     a_0^{(k)}  &  a_{(n+2-k)-2}^{(k)}  \\
  \end{array}
\right|
= 0
$

\vspace{5ex}

$a_{(n+1-k)-1}^{(k+1)} = \left|
  \begin{array}{cc}
     a_{n+2-k}^{(k)}  &  a_{1}^{(k)}  \\
     a_0^{(k)}  &  a_{(n+2-k)-1}^{(k)}  \\
  \end{array}
\right|
= a_{n+2-k}^{(k)} a_{(n+2-k)-1}^{(k)}
$

\vspace{5ex}

$a_{n+1-k}^{(k+1)} = \left|
  \begin{array}{cc}
     a_{n+2-k}^{(k)}  &  a_{0}^{(k)}  \\
     a_0^{(k)}  &  a_{n+2-k}^{(k)}  \\
  \end{array}
\right|
= (a_{n+2-k}^{(k)})^2 - (a_{0}^{(k)})^2
$.

Due to this, we say that the only coefficients of the polynomial $P^{(k+1)}_{n+1}(\lambda)$ which are not equal to zero are $a_0^{(k+1)}, a_{(n+1-k)-1}^{(k+1)}, a_{n+1-k}^{(k+1)}$. Determining the $k+3$ Jury Condition, we have
\begin{equation}
|a_{n+1-k}^{(k+1)}| > |a_0^{(k+1)}|
\end{equation}

Then, since this condition succeeds $k+2$ Jury Condition for any $n\ge k$, we can say that this condition futher shorten the range of stability of $r$. And thus, proving the conjecture.

\end{proof}

\section{Appendix}

In this appendix, we present Jury Conditions in the following form, which is essentialy same to Jury Conditons for Stability Test given in \cite{murray} (cf. Appendix B in \cite{murray}) .

Given a Jacobian matrix with dimension $(n+1)\times(n+1)$, a characteristic polynomial of order $n+1$ is of the form:
\begin{equation}
P(\lambda) = a_0 \lambda^{n+1} + a_1\lambda^n + a_2 \lambda^{n-1} + \cdots + a_n\lambda + a_{n+1}
\end{equation}
where $a_i \in \textbf{R}, \ i=0, 1, \cdots , n+1.$

From this, we have the characteristic polynomial of order $n+1$ given by:
\begin{equation}
P_{n+1}^{(1)}(\lambda) = a_0^{(1)} \lambda^{n+1} + a_1^{(1)}\lambda^n + a_2^{(1)} \lambda^{n-1} + \cdots + a_n^{(1)}\lambda + a_{n+1}^{(1)}
\end{equation}
where $a_i^{(1)} \in \textbf{R}, \ i=0, 1, \cdots , n+1, \hspace{2ex} a_{n+1}^{(1)} \neq 0.$

The formulas used in defining $b_k, c_k, \cdots, q_k$ are rewritten as follows:

\vspace{3ex}

\begin{equation}
\begin{split}
&a_k^{(2)} := \left|
  \begin{array}{cc}
     a_{n+1}^{(1)}  &  a_{n-k}^{(1)}  \\
     a_0^{(1)}  &  a_{k+1}^{(1)}  \\
  \end{array}
\right|
, \hspace{2ex} k = 0, 1, 2, \ldots, n \\
&\mbox{} \\
&a_k^{(3)} := \left|
  \begin{array}{cc}
     a_{n}^{(2)}  &  a_{n-1-k}^{(2)}  \\
     a_0^{(2)}  &  a_{k+1}^{(2)}  \\
  \end{array}
\right|
, \hspace{2ex} k = 0, 1, 2, \ldots, n-1 \\
&\mbox{} \\
&\hspace{15ex} \vdots  \\
&\mbox{} \\
&a_k^{(n+2)} := \left|
  \begin{array}{cc}
     a_{3}^{(n+1)}  &  a_{2-k}^{(n+1)}  \\
     a_0^{(n+1)}  &  a_{k+1}^{(n+1)}  \\
  \end{array}
\right|
, \hspace{2ex} k = 0, 1, 2 
\end{split}
\label{eq:juryformula}
\end{equation}

\noindent Thus, the Jury's stability table is given by:
\begin{table}[htbp]
\begin{center}
\begin{tabular}{|c|c|c|c|c|c|c|c|c|c|} \hline
row & $\lambda^0$ & $\lambda^1$ & $\lambda^2$ & $\lambda^3$ & $\cdots$ & $\lambda^{n-2}$ & $\lambda^{n-1}$ & $\lambda^{n}$ & $\lambda^{n+1}$ \\\hline
1 & $a_{n+1}^{(1)}$ & $a_{n}^{(1)}$ & $a_{n-1}^{(1)}$ & $a_{n-2}^{(1)}$ & $\cdots$ & $a_{3}^{(1)}$ & $a_{2}^{(1)}$ & $a_{1}^{(1)}$ & $a_{0}^{(1)}$ \\\hline
2 & $a_{0}^{(1)}$ & $a_{1}^{(1)}$ & $a_{2}^{(2)}$ & $a_{3}^{(1)}$ & $\cdots$ & $a_{n-2}^{(1)}$ & $a_{n-1}^{(1)}$ & $a_{n}^{(1)}$ & $a_{n+1}^{(1)}$ \\\hline
3 & $a_{n}^{(2)}$ & $a_{n-1}^{(2)}$ & $a_{n-2}^{(2)}$ & $a_{n-3}^{(2)}$ & $\cdots$ & $a_{2}^{(2)}$ & $a_{1}^{(2)}$ & $a_{0}^{(2)}$ &  \\\hline
4 & $a_{0}^{(2)}$ & $a_{1}^{(2)}$ & $a_{2}^{(2)}$ & $a_{3}^{(2)}$ & $\cdots$ & $a_{n-2}^{(2)}$ & $a_{n-1}^{(2)}$ & $a_{n}^{(2)}$ &  \\\hline
$\vdots$ & $\vdots$ & $\vdots$ & $\vdots$ & $\vdots$ & $\vdots$ & $\vdots$ & $\vdots$ &  &  \\\hline
2n-5 & $a_{3}^{(n+1)}$ & $a_{2}^{(n+1)}$ & $a_{1}^{(n+1)}$ & $a_{0}^{(n+1)}$ &  &  &  &  &  \\\hline
2n-4 & $a_{0}^{(n+1)}$ & $a_{1}^{(n+1)}$ & $a_{2}^{(n+1)}$ & $a_{3}^{(n+1)}$ &  &  &  &  &  \\\hline
2n-3 & $a_{2}^{(n+2)}$ & $a_{1}^{(n+2)}$ & $a_{0}^{(n+2)}$ &  &  &  &  &  &  \\\hline
\end{tabular}
\end{center}
\caption{Revised Jury's Stability Table}
\label{fig:revisedjury}
\end{table}

Finally, we rewrite the Jury Conditions for stability given as follows:
\begin{align*}
1. \hspace{2ex} & P^{(1)}_{n+1}(1) > 0  \\
2. \hspace{2ex} & (-1)^n P^{(-1)}_{n+1}(1) > 0 \\
3. \hspace{2ex} &|a_{n+1}^{(1)}| < a_0^{(1)} \\
4. \hspace{2ex} &|a_{n}^{(2)}| > a_0^{(2)} \\
5. \hspace{2ex} &|a_{n-1}^{(3)}| > a_0^{(3)} \\
\hspace{2ex}&\hspace{5ex} \vdots \\
(n+2).\hspace{2ex} &|a_{2}^{(n+2)}| > a_0^{(n+2)} 
\end{align*}
Also, we need to define polynomial in which the coefficients are from Table2, $a_{n+1}^(i) \ (i=2, 3, \ldots, n+2)$. Using the same notations we had in (2.3), we denote the polynomials as follows:

\begin{equation}
\begin{split}
P^{(2)}_{n+1}(\lambda) &= a_0^{(2)}\lambda^{n} + a_1^{(2)}\lambda^{n-1} + a_2^{(2)}\lambda^{n-2} + \cdots + a_{n-1}^{(2)}\lambda + a_{n}^{(2)}, \\
P^{(3)}_{n+1}(\lambda) &= a_0^{(3)}\lambda^{n-1} + a_1^{(3)}\lambda^{n-2} + a_2^{(3)}\lambda^{n-3} + \cdots + a_{n-2}^{(3)}\lambda + a_{n-1}^{(3)}, \\
\hspace{2ex}\hspace{15ex} \vdots  &\\
P^{(n+1)}_{n+1}(\lambda) &= a_0^{(n+1)}\lambda^{3} + a_1^{(n+1)}\lambda^{2} + a_2^{(n+1)}\lambda + a_{n}^{(n+1)},  \\
P^{(n+2)}_{n+1}(\lambda) &= a_0^{(n+2)}\lambda^{2} + a_1^{(n+2)}\lambda + a_{2}^{(n+2)}.
\label{eq:Pol3}
\end{split}
\end{equation}


\end{document}